\newtheorem{thm}{Theorem}
\newtheorem{lem}{Lemma}
\newtheorem{re}{Remark}
\newtheorem{ex}{Example}
\newtheorem{crl}{Corollary}
\newcounter{tdfn}
\newcounter{trk}
\def\R{{\mathbb R}}
 \def\Z{{\mathbb Z}}
 \def\0{{\mathbbf 0}}
 \def\1{{\mathbbf 1}}
 \def\Z{{\mathbf Z}}
 \def\G{{\cal G}}
 \newcommand{\skcrossl}{\raisebox{-0.25\height}{\includegraphics[width=0.5cm]{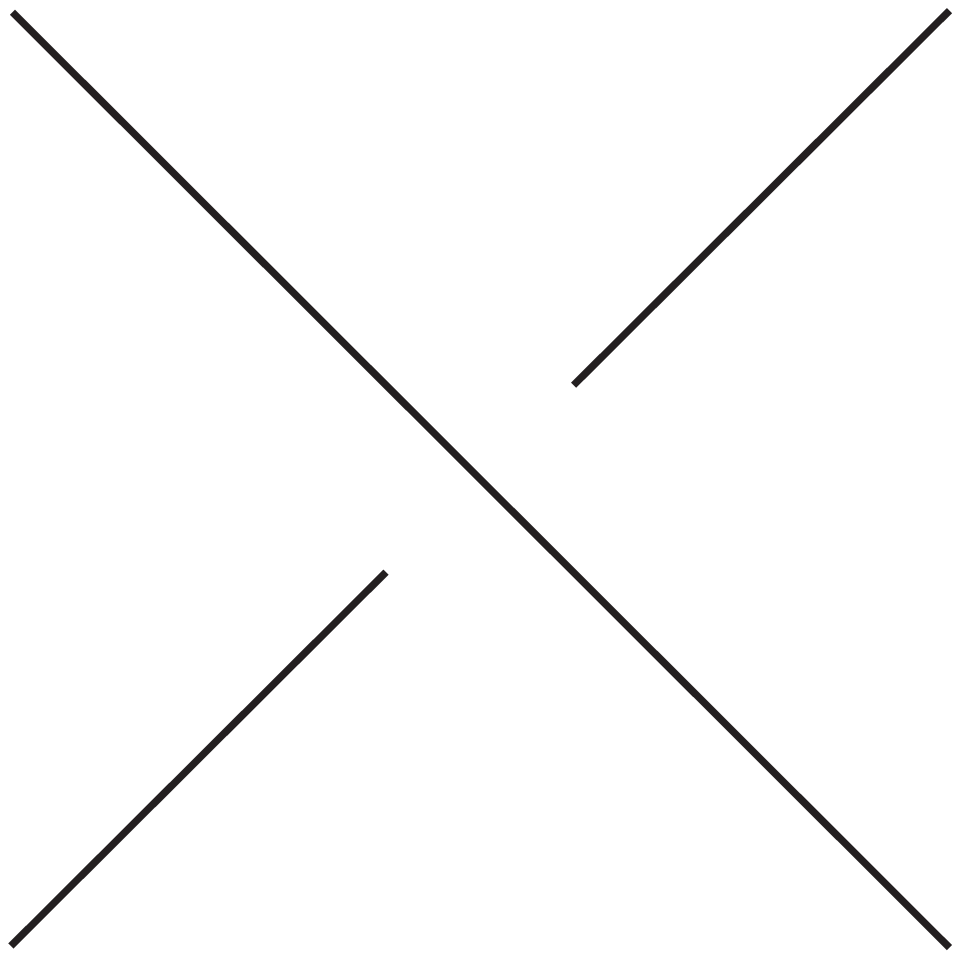}}}
 \newcommand{\skcrossr}{\raisebox{-0.25\height}{\includegraphics[width=0.5cm]{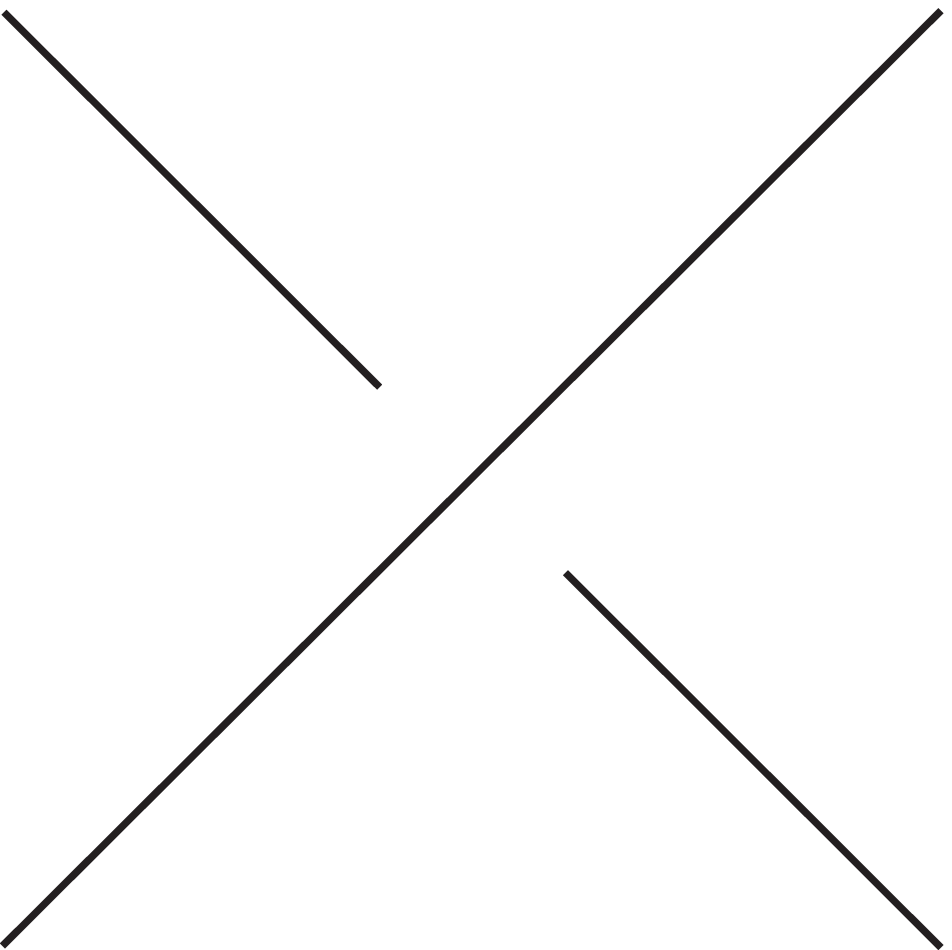}}}
\title{Virtual Crossing Numbers for Virtual Knots}
\author{Vassily Olegovich Manturov\footnote{Peoples' Friendship University of Russia, vomanturov at yandex.ru}\footnote
{The author is partially supported by  grants of RFBR 10-01-00748-a,
RF President NSh -- 3224.2010.1, Ministry of Education and Science
of the Russian Federation 14.740.11.0794 and the Analytic
Departmental Task Program ``Development of the High School
Scientific Potential''}}
\date{}
\begin{document}

\maketitle

\begin{flushright}
To the memory of my father \\ Oleg Vassilievich Manturov \\
(July,3,1936 - July, 23,2011).

\end{flushright}

\begin{abstract} The aim of the present paper is to prove that the minimal
 number of virtual crossings for some families of virtual knots grows
quadratically with respect to the minimal number of classical
crossings.

All previously known estimates for virtual crossing number
(\cite{Af,DK,ST} etc.) were principally no more than linear in the
number of classical crossings (or, what is the same, in the number
of edges of a virtual knot diagram) and no virtual knot was found
with virtual crossing number greater than the classical crossing
number.
\end{abstract}

MSC: 57M25, 57M27

Keywords: Knot, virtual knot, graph, crossing number, parity

\section{Introduction}

The main idea of the present paper is to use the {\em parity
arguments}: if there is a smart way to distinguish between {\em
even} and {\em odd} crossings of a virtual knot so that they behave
nicely under Reidemeister moves then there is a way to reduce some
problems about {\em virtual knots} to analogous problems about {\em
their diagrams (representatives)}.

Thus, we have to find a certain family of four-valent graph for
which the crossing number (minimal number of {\em additional
crossings} (prototypes of virtual crossings) for an immersion in
$\R^{2}$) is quadratic with respect to the number of {\em vertices}
(prototypes of classical crossings).

The study of parity has been first undertaken in \cite{Sbornik1},
see also \cite{Sbornik2,MyNewBook} where functorial mappings from
virtual knots to virtual knots were constructed, minimality theorems
were proved, and many virtual knot invariants were refined. In the
paper \cite{Projection}, by using parity, I constructed a
diagrammatic projection mapping from virtual knots to classical
knots.

In the case of graphs, such families having quadratic growth for the
number of additional crossings with respect to the number of the
crossings themselves are quite well known to graph theorists: even
for trivalent graphs the generic crossing number grows quadratically
with respect to the number of vertices, see, e.g., \cite{PSSz}.

{\bf Notational remark.} For graphs, we shall use the standard
terminology: the number of vertices $v$, and the crossing number
$cr$, the latter referring to the minimal number of additional
crossings for generic immersions, see ahead. For virtual knots, we
shall use the notation: $vi(K)$ and $cl(K)$ for minimal virtual
crossing number and minimal classical crossing number over all
diagrams of a given knot.

\subsection{Acknowledgements}

I wish to express my special gratitude to D.P.Ilyutko, O.V.Manturov,
L.H.Kauffman, S.Jablan, and A.V.Yudin for fruitful discussions.

I am especially grateful to Yu.S.Makarychev for fruitful
consultations concerning graph theoretical questions.

\section{Virtual Knots and Crossing Numbers}

A {\em virtual diagram} is a four-valent graph on the plane where
each crossing is either {\em classical} (in this case one pair of
opposite edges are marked as an overcrossing pair, and the other
pair is marked as an undercrossing pair; the undercrossing pair is
drawn by means of a broken line) or {\em virtual} (virtual crossings
are encircled).

Another way of looking at a virtual diagram is as follows. We say
that a four-valent graph is {\em framed} if at every crossing of it,
the  four (half)edges incident to this crossings are split into two
sets of (formally) opposite half-edges. An immersion of a
four-valent graph in $\R^{2}$ is  {\em generic} if all points having
more than one preimage are intersection points of exactly two edges
at their interior points. Then a virtual diagram is a generic
immersion of a four-valent framed graph with all images of graph
vertices endowed with classical crossing structure and all
intersection points between edges marked as virtual crossings. Those
points with more than one preimage are called {\em crossing points}.

A virtual {\em link} is an equivalence class of virtual diagrams
modulo the {\em detour move} and the classical Reidemeister moves.
Classical Reidemeister moves deal with classical crossings only;
they refer to a domain of the plane with no virtual crossings
inside. The detour move is the move which can be viewed as a
transformation of the immersion outside the images of classical
crossings: it takes an arc containing virtual crossings (and,
possibly, self-crossings of an edge with itself) only and replaces
it with an arc having the same ends but drawn in another way (all
new crossings are to be virtual).

A virtual knot is a one-component virtual link. In this paper we
deal with virtual knots only, however, the argument can be easily
modified for the case of links.

The {\em classical (resp., virtual) crossing number} $cl(K)$ (resp.,
$vi(K)$) of a virtual knot $K$ is the minimum of the numbers of
classical (resp., virtual) crossings over all diagrams of $K$.

Classical crossing numbers of virtual knots were studied for a long
time, see, e.g. \cite{MyNewBook}, and references therein.

For estimates of virtual crossing numbers for virtual knots see
\cite{Af,DK,ST}.

In the last years, some attempts to compare the classical and
virtual crossing numbers were undertaken, e.g., Satoh and Tomiyama
\cite{ST} proved that for any two positive numbers $m<n$ there is a
virtual knot $K$ with minimal virtual crossing number $vi(K)=m$ and
minimal classical crossing number $cl(K)=n$.

However, no results were found in the opposite direction: for all
known virtual knots the number of classical crossings was greater
than or equal to the number of virtual crossings (see tables due to
J.Green \cite{Green}).

In the present paper, we disprove this conjecture by reducing the
problem {\em from knots to graphs}: we take some family of graphs
for which $cr$ grows quadratically with respect to the number of
vertices, transform them into four-valent graphs (which can
correspond to diagrams of virtual knots with classical vertices
corresponding to crossings), turn these graph into a good shape
(irreducibly odd, see ahead) by some transformations which increase
the complexity a little, and then use the fact that for irreducibly
odd graphs the  crossing number is equal to the virtual crossing
number of the underlying knots.

\subsection{$4$-Graphs and Free Knots}

Now, let us change the point of view to virtual knots and consider
some much simpler objects.

By a {\em $4$-graph} we mean either a split sum of several
$1$-complexes each of which is either a regular finite $4$-graph
(loops and multiple edges are admitted) or is homeomorphic to a
circle. By a {\em vertex} of a $4$-graph we mean a vertex of some of
its graph components. By an {\em edge} we mean either an edge of
some of its graph components or a {\em whole} circle component. The
latter are called {\em circular} edges.

All edges which are not circular are considered as equivalence
classes of {\em half-edges}. We say that a $4$-graph is {\em framed}
if for each vertex of it, the four half-edges incident to this
vertex are split into two pairs of (formally opposite) half-edges.

By a {\em unicursal component} of a framed $4$-graph we mean either
some of its circular components or an equivalence class of edges of
some graph component, where the equivalence is defined as follows.
Two edges $a,b$ are {\em equivalent} if there exists a chain of
edges $a=a_{1},\dots, a_{n}=b$ for which each two adjacent edges
$a_{i},a_{i+1}$ have two half edges $a'_{i},a'_{i+1}$ which are
opposite at some vertex.

A framed $4$-graph is {\em oriented} if all its circular components
are oriented, and all edges of its graph components are oriented in
such a way that at each vertex, for each pair of opposite edges, one
of them is incoming, and the other one is emanating.

By a {\em free link} we mean an equivalence class of framed
$4$-graphs by the following equivalences (three Reidemeister moves):

The first Reidemeister move is an addition/removal of a loop, see
Fig.\ref{1r}, left.

\begin{figure}
\centering\includegraphics[width=100pt]{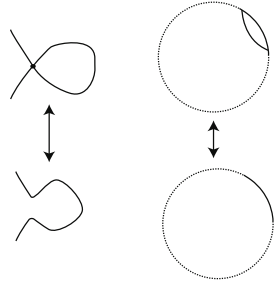}
\caption{Addition/removal of a loop on a graph and on a chord
diagram} \label{1r}
\end{figure}

The second Reidemeister move adds/removes a bigon formed by a pair
of edges which are adjacent in two edges, see Fig. \ref{2r},top.

\begin{figure}
\centering\includegraphics[width=150pt]{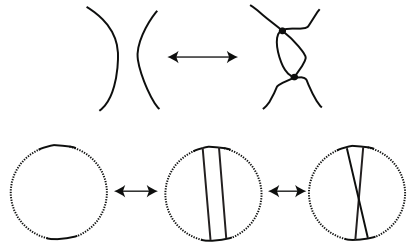} \caption{The second
Reidemeister move and two chord diagram versions of it} \label{2r}
\end{figure}

Note that the second Reidemeister move adding two vertices does not
impose any conditions on the edges it is applied to.

The third Reidemeister move is shown in Fig.\ref{3r},top.

\begin{figure}
\centering\includegraphics[width=150pt]{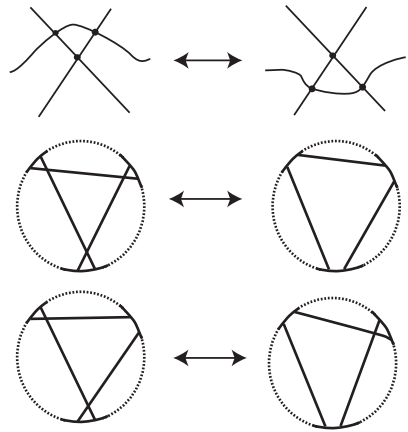} \caption{The third
Reidemeister move and its chord diagram versions} \label{3r}
\end{figure}

Note that these transformations may turn a circular component into a
{\em unicursal component of a framed $4$-graph}.

The orientation of framed $4$-graphs naturally leads to the notion
of {\em oriented free links}.

One can easily see that the number of {\em unicursal components} of
a framed $4$-graph does not change under the Reidemeister moves. So,
one can speak about the number of {\em unicursal components} of a
free link. By a {\em component} of a link we mean an unicursal
component unless specified otherwise.

A {\em free knot} is a $1$-component free link.

Clearly, free knots and free links are equivalence classes of
virtual knots and virtual links by the following two equivalences:
the {\em crossing switch} $\skcrossr\longleftrightarrow \skcrossl$
and the {\em virtualisation move}; the latter move flanks a
classical crossing by two virtual crossings, as shown in Fig.
\ref{virtua}.

\begin{figure}
\centering\includegraphics[width=200pt]{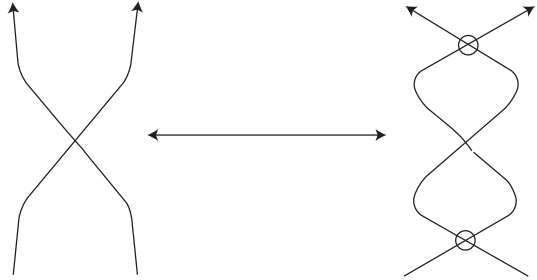}
\caption{Virtualisation} \label{virtua}
\end{figure}

The meaning of the first move is that we forget which branch of a
knot is going {\em over} in a classical crossing (the other branch
goes under); the meaning of the second move is that we allow to flip
the cyclic clockwise (half)edge order at a crossing from $1,2,3,4$
to $1,4,3,2$.

\subsection{Chord diagrams}

A {\em chord diagram} is a finite cubic graph consisting of an
 cycle (the {\em core}) passing through all vertices and a collection of
non-oriented edges connecting vertices. We also admit {\em the empty
chord diagram} which is just a circle (in this case the circle is
the core). A chord diagram is {\em oriented} if its core is
oriented.

Chord diagrams are in one-to-one correspondence with framed
$4$-graphs having one unicursal component. We associate with the
empty chord diagram the circle (the framed $4$-graph with one
component and no vertices) ; with any other chord diagram $D$ we
associate the framed $4$-graph as follows. We take the $1$-complex
obtained from the chord diagram $C$ as follows. Take the core $Co$
of the chord diagram $C$ and identify those points connected by
chords; we get a $4$-graph; for this $4$-graph we say that two
(half)-edges are opposite if they come from two (half)-edges
approaching the same chord end on $Co$. Certainly, {\em oriented}
chord diagrams are in a bijective correspondence with {\em oriented}
framed $4$-graphs with one unicursal component.

For chord diagrams, the Reidemeister moves look as shown in
Fig.\ref{1r},right,\ref{2r},bottom,\ref{3r}, centre and bottom.

We say that two chords $A,B$ of a chord diagram $C$ are {\em linked}
if the two ends of the chord $B$ lie in distinct connected component
of the complement to the endpoints of $A$ in the core circle of the
Gauss diagram, and {\em unlinked} otherwise. For any chord $A$ we
say that $A$ is unlinked with itself. We say that a chord $A$ of a
chord diagram  is {\em even} if it is linked with evenly many
chords, and {\em odd} otherwise.

Analogously, for a framed $4$-graph with one unicursal component we
say that a crossing is {\em even} (resp., {\em odd}) iff the
corresponding chord is {\em even} (resp., {\em odd}).

A chord diagram (resp., framed $4$-graph with one unicursal
component) is {\em odd} if all chords of it are odd.
 We say that an odd four-valent framed graph with one unicursal
 component is {\em irreducibly odd} if no second decreasing Reidemeister move
can be applied to it. At the level of chord diagram this means that
there are no two chords $A,B$ such that one end of $A$ is adjacent
to one end of $B$ on the core circle, and the other end of $A$ is
adjacent to the other end of $B$.

The importance of this notion is the following: the oddness of a
framed $4$-graph means that neither decreasing first Reidemeister
move or a third Reidemeister move can be applied to it. The
irreducible oddness also requires that no second Reidemeister move
would be applicable. So, irreducibly odd framed $4$-graphs can be
operated on only by those Reidemeister moves which increase the
number of crossings.

An irreducibly odd diagram is given in Fig. \ref{irredodd}.

\begin{figure}
\centering\includegraphics[width=100pt]{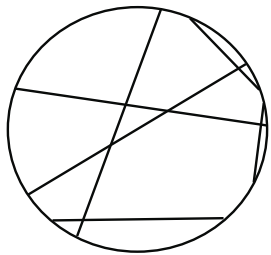} \caption{An
irreducibly odd diagram} \label{irredodd}
\end{figure}

As we shall see further, odd chords play a crucial role in the study
of free knots.

\subsection{The bracket}

In the present section we shall introduce a simple invariant of free
knots which allows one  {\em to reduce many problems about knots to
problems about their representatives}.

We shall start with the notion of smoothing. Let $\Gamma$ be a
framed $4$-graph. By {\em smoothing} of $\Gamma$ at $v$ we mean any
of the two framed $4$-graphs obtained by removing $v$ and repasting
the edges as $a-b$, $c-d$ or as $a-d,b-c$, see Fig. \ref{smooth}.
The rest of the graph (together with all framings at vertices except
$v$) remains unchanged. We may then consider further smoothings of
$\Gamma$ at {\em several} vertices.

\begin{figure}
\centering\includegraphics[width=150pt]{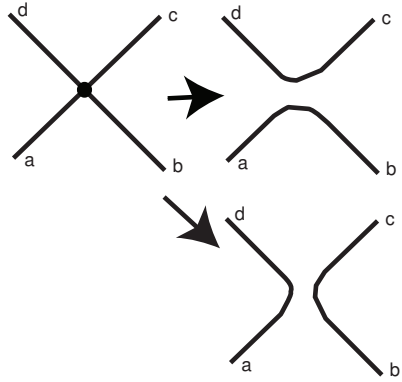} \caption{Two
smoothings of a vertex of for a framed graph} \label{smooth}
\end{figure}

Note that this operation may lead to circular connected components
of the graph.

Let $\G$ be the set of equivalence classes of all four-valent framed
graphs modulo the second Reidemeister move. Consider the formal
$\Z_{2}$-linear space generated by all classes from $\G$.

Now, for a given framed $4$-graph $\Gamma$, consider the following
sum

\begin{equation}
[\Gamma]=\sum_{s\;even.,1\; comp} \Gamma_{s},\label{eqlo}
\end{equation}
which is taken over all smoothings in all {\em even} vertices, and
only those summands are taken into account where $\Gamma_{s}$ has
one unicursal component.

Thus, if  $\Gamma$ has $k$ even vertices, then $[\Gamma]$ contains
at most $2^{k}$ summands, and if all vertices of $\Gamma$ are odd,
then we shall have exactly one summand, the graph $\Gamma$ itself.
Consider  $[\Gamma]$ as an element of $\Z_{2}\G$. In this case it is
evident that if all vertices of $\Gamma$ are even then
$[\Gamma]=[\Gamma_0]$: by construction, all summands in the
definition of $[\Gamma]$ are equal to $[\Gamma_0]$, it can be easily
checked that the number of such summands is odd.

Now, we are ready to formulate the main result of the present
section:

\begin{thm}[\cite{Sbornik1}]
If $\Gamma$ and $\Gamma'$ represent the same free knot then in
$\Z_{2}\G$ the following equality holds:
$[\Gamma]=[\Gamma']$.\label{mainthm}
\end{thm}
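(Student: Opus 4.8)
The plan is to verify invariance of $[\Gamma]$ under each of the three Reidemeister moves on framed $4$-graphs, since these generate the equivalence relation defining free knots. The underlying principle is that $[\Gamma]$ is built only from smoothings at \emph{even} vertices and retains only one-component summands, so the main thing to track is how parity of vertices and connectivity of smoothed diagrams behave under local modifications. Throughout, I work in $\Z_2\G$, i.e.\ modulo the second Reidemeister move and with $\Z_2$ coefficients, so any collection of summands that pairs off or whose multiplicity is even simply vanishes.

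\textbf{First Reidemeister move.} Suppose $\Gamma'$ is obtained from $\Gamma$ by adding a loop at a new vertex $v$. The key observation is that $v$ is an even vertex (a curl is linked with no chords, hence with evenly many), so the definition of $[\Gamma']$ includes smoothings at $v$. One of the two smoothings at $v$ reproduces $\Gamma$ with a small circle split off (zero components surviving, or rather not one unicursal component in the relevant summands) and the other reproduces $\Gamma$ itself; more precisely, exactly one of the two smoothings at $v$ yields a one-component graph, namely the one giving back $\Gamma$, while the other produces a diagram with an extra circular component, which is discarded. One must also check that adding the loop does not change the parity of any \emph{other} vertex of $\Gamma$ (a curl contributes no linking to pre-existing chords) and does not affect whether the corresponding smoothed summands have one component. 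Hence the surviving summands of $[\Gamma']$ are in bijection with those of $[\Gamma]$, and using the first Reidemeister move itself inside $\G$ to cancel the leftover curl, $[\Gamma']=[\Gamma]$.

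\textbf{Second and third Reidemeister moves.} For the second move, adding a bigon introduces two new vertices $v_1,v_2$; here the subtlety is that $v_1,v_2$ need \emph{not} be even (their parity depends on how many chords separate the two strands of the bigon), so one cannot simply smooth them away. The argument instead partitions the summands of $[\Gamma']$ according to the four smoothings at $\{v_1,v_2\}$ when these vertices are even, or handles the odd case directly: in each group, two of the resulting configurations are related by a second Reidemeister move in $\G$ (so they are \emph{equal} in $\Z_2\G$) and the other two cancel in pairs or reduce to the corresponding summand of $[\Gamma]$; a parity bookkeeping shows the multiplicities match mod $2$. The third move is the most delicate: it preserves the number of vertices but can change which vertices are even and which are odd (the triangle move can flip parities of the three participating chords and, through them, of chords linked to them). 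The plan is to enumerate the parity patterns on the three triangle vertices that are compatible with being a genuine $4$-graph, show that in each admissible pattern the collections of one-component even-smoothings on the two sides of the move are matched by a local bijection (realized by third or second Reidemeister moves in $\G$, which are identities in $\Z_2\G$), with any mismatch occurring in even-size families that die mod $2$.

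I expect the parity bookkeeping for the third Reidemeister move to be the main obstacle: one has to control not just the parities of the three chords directly involved but the induced parity changes on all chords linked to them, and to confirm that the "even, one-component" selection rule transforms consistently. The cleanest route is probably to observe that the set of even chords, and more refined data such as the $\Z_2$-valued linking pairing restricted to relevant chords, is governed by a quadratic form / bilinear form that changes in a controlled way under the triangle move, so that the count of admissible smoothings producing one-component diagrams changes only by an even amount; combined with the fact that surviving non-cancelling summands on the two sides are pairwise related by moves that are trivial in $\Z_2\G$, this yields $[\Gamma]=[\Gamma']$. Since the statement is attributed to \cite{Sbornik1}, I would also cross-check the delicate connectivity counts (e.g.\ the parity of the number of one-component smoothings in the all-even case, already invoked in the excerpt) against that reference rather than rederiving them from scratch.
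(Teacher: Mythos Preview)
First, note that the paper does not prove this theorem at all: it is imported from \cite{Sbornik1} and used as a black box, so there is no ``paper's own proof'' to compare against. Your overall strategy --- verify invariance of $[\Gamma]$ under each Reidemeister move, exploiting that the bracket smooths only at even vertices and keeps only one-component states --- is exactly the approach used in that reference.

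Your treatment of the third move, however, rests on a factual error. You claim the triangle move ``can flip parities of the three participating chords and, through them, of chords linked to them.'' Neither is true for the Gaussian parity used here. A chord $d$ outside the triangle has both endpoints off the three short arcs being permuted, so every linking number involving $d$ is unchanged and its parity is fixed. For a chord $a$ among the three, each of its two endpoints is swapped with a single adjacent endpoint, so the number of chord endpoints on a given side of $a$ changes by $\pm 1 \pm 1$, an even number, and the parity of $a$ is also preserved. What Gaussian parity does impose on the third move is that the number of \emph{odd} crossings among the three is even; equivalently, either all three are even or exactly one is. That dichotomy is what organises the real case analysis (match the $2^{3}$ local smoothings when all three are even, using second Reidemeister identifications in $\G$ and $\Z_{2}$-cancellation; when only one is even, its two smoothings leave the remaining odd pair in configurations that again agree in $\G$). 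The bilinear-form machinery you propose is aimed at an obstacle that is not there. Two smaller points: in the first move, the ``good'' smoothing of the curl vertex returns $\Gamma$ exactly, with no leftover curl to cancel (and $\G$ is defined modulo the \emph{second} Reidemeister move only, so a first move would not be available in $\G$ anyway); for the second move, you should state explicitly that the two new vertices always have the \emph{same} parity, since that is what makes your even/odd case split well-posed.
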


Theorem \ref{mainthm} yields the following
\begin{crl}
Let  $\Gamma$ be an irreducibly odd framed 4-graph with one
unicursal component. Then any representative $\Gamma'$ of the free
knot $K_{\Gamma}$, generated by $\Gamma$, has a smoothing  $\tilde
\Gamma$ equivalent to $\Gamma$ as a framed $4$-graph. In particular,
$\Gamma$ is a minimal representative of the free knot $K_{\Gamma}$
with respect to the number of vertices.\label{sld}
\end{crl}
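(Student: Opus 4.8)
The plan is to apply Theorem \ref{mainthm} directly, exploiting the fact that $\Gamma$ is \emph{odd} to control the left-hand side of the bracket equality. First I would observe that since every vertex of $\Gamma$ is odd, the sum \eqref{eqlo} defining $[\Gamma]$ has exactly one summand, namely $\Gamma$ itself, so $[\Gamma] = \Gamma$ as an element of $\Z_{2}\G$; moreover, since $\Gamma$ is \emph{irreducibly} odd, no second Reidemeister move applies to it, so $\Gamma$ is not identified with any graph having fewer vertices under the equivalence defining $\G$, i.e. $\Gamma$ is a genuinely nonzero, ``irreducible'' basis-type element of $\Z_{2}\G$. Now let $\Gamma'$ be any representative of $K_{\Gamma}$. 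By Theorem \ref{mainthm}, $[\Gamma'] = [\Gamma] = \Gamma$ in $\Z_{2}\G$.

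The next step is to read off what $[\Gamma'] = \Gamma$ means. By definition, $[\Gamma'] = \sum_{s} \Gamma'_{s}$, the sum over all smoothings of $\Gamma'$ at its even vertices that yield a one-component graph, each term taken modulo the second Reidemeister move. Since this sum equals $\Gamma$ (a single nonzero term) in the $\Z_{2}$-vector space freely generated by $\G$, at least one of the smoothings $\Gamma'_{s}$ must be equivalent, modulo the second Reidemeister move, to $\Gamma$ — otherwise the class $\Gamma$ could not appear with coefficient $1$ on the right. This gives a smoothing $\tilde\Gamma = \Gamma'_{s}$ of $\Gamma'$ that is equivalent to $\Gamma$ as a framed $4$-graph up to second Reidemeister moves; and since $\Gamma$ is irreducibly odd, the only second Reidemeister moves available take it to graphs with \emph{more} vertices, so any graph in its $\G$-class has at least as many vertices as $\Gamma$. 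Hence $\tilde\Gamma$ has at least $|V(\Gamma)|$ vertices.

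For the minimality conclusion, I would then compare vertex counts: smoothing can only remove vertices (we smooth \emph{at} even vertices of $\Gamma'$, deleting them, and possibly merging or creating circular components but never adding vertices), so $|V(\Gamma')| \ge |V(\tilde\Gamma)| \ge |V(\Gamma)|$. Since $\Gamma'$ was an arbitrary representative of $K_{\Gamma}$, this shows $\Gamma$ realizes the minimal number of vertices over the free-knot equivalence class, which is exactly the claim.

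The step I expect to be the main obstacle is the linear-algebra argument that ``$[\Gamma'] = \Gamma$ in $\Z_{2}\G$ forces some smoothing to be $\G$-equivalent to $\Gamma$'': this requires knowing that $\Gamma$, as an irreducibly odd graph, is not accidentally equal in $\Z_2\G$ to a $\Z_2$-combination of several \emph{distinct} other classes, i.e. that distinct $\G$-classes really are linearly independent in the formal span — which is true by construction since $\Z_2\G$ is \emph{freely} generated by $\G$ — together with the bookkeeping that a term equivalent to $\Gamma$ appears an odd number of times among the $\Gamma'_s$. Care is also needed to confirm that $\Gamma$ being irreducibly odd genuinely prevents its $\G$-class from containing a graph with fewer vertices; this is precisely the content of the ``importance of this notion'' paragraph preceding the statement, namely that only vertex-increasing moves act on an irreducibly odd graph, so I would cite that observation directly.
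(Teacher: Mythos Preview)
Your proposal is correct and follows essentially the same approach as the paper: compute $[\Gamma]=\Gamma$ from oddness, invoke Theorem~\ref{mainthm} to get $[\Gamma']=\Gamma$, and read off the existence of the required smoothing from the definition of the bracket together with irreducibility. The paper's own argument is a terse two-sentence version of exactly this, and you have correctly filled in the details it leaves implicit (the free-generation of $\Z_{2}\G$, the vertex-count comparison, and why irreducible oddness pins down $\Gamma$ as the minimal element of its $\G$-class).
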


Indeed, if we look at an irreducibly odd graph $\Gamma$, we see that
$[K_{\Gamma}]=\Gamma$. In the left hand side of this equality,
$K_\Gamma$ means a free knot, i.e., an equivalence class of a
$4$-graph
 $\Gamma$ modulo the three Reidemeister moves. In the right hand
 side, we have just the graph $\Gamma$ modulo the second
 Reidemeister moves.

In fact, classification of elements from $\G$ is very easy.

Two graphs are equivalent whenever their two minimal representatives
coincide.

So, in this case one can say that {\em the bracket} takes {\em
dynamical objects} (framed $4$-graphs modulo Reidemeister moves) to
{\em statical objects} (framed $4$-graphs modulo just the second
Reidemeister moves, or just their minimal representatives).

In this way, in \cite{Sbornik1} I proved that free knots are
generally not invertible: this was done by means of finding a good
non-invertible representative for free links and some other
orientation-sensitive parity arguments.


\subsection{Crossing number for graphs}

Given a graph $\Gamma$; analogously to the case of four-valent
graphs, by a {\em generic} immersion of $\Gamma$ in $\R^{2}$ we mean
an immersion $\Gamma\to \R^{2}$ such that

\begin{enumerate}

\item the number of points with more than one preimage is finite;

\item tach such point has exactly two preimages;

\item these two preimages are interior points of edges of the graph,
and the intersection of the images of edges at such a point is
transverse.

\end{enumerate}

By {\em crossing number} $cr(K)$ of a graph $\Gamma$ we mean the
minimal number of crossing points over all generic immersions
$\Gamma\to \R^{2}$.

When we deal with framed $4$-graphs, we restrict ourselves for such
immersions for which at the image of every vertex the images of any
two formally opposite edges turn out to be opposite on the plane.

\begin{ex}
Consider the only $4$-graph with one vertex $A$ and two edges $p,q$
connecting $A$ to $A$. There are two possible framings for this
graph; one of these framings (where one half edge of $p$ is formally
opposite to the other half edge of $p$) leads to a framed $4$-graph
with two unicursal components. Such a graph is certainly non-planar,
and its crossing number is equal to one, see Fig. \ref{smplgrh}. The
other framing (where a half-edge of the edge $p$ is opposite to a
half-edge of the edge $q$) is planar, so, for that framing the
crossing number is $0$.
\end{ex}

\begin{figure}
\centering\includegraphics[width=120pt]{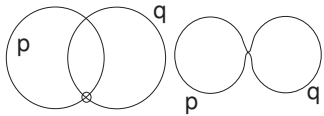} \caption{A
$4$-graph with two framings} \label{smplgrh}
\end{figure}

Now, let us present some examples of graphs where the crossing
number grows quadratically. Let $p$ be a prime number; consider the
chord diagram with $(p-3)/2$ chords obtained as follows: take a
standard circle $x^{2}+y^{2}=1$ on the plane, take all residue
classes modulo $p$ except $0,p-1,1$, and put the residue class on
the standard (core) circle as follows: the vertex corresponding to
the residue class $r$ will be located at $(\cos \frac{ 2\pi r}{p},
\sin \frac{2\pi r}{p})$. Now, every crossing $r$ is coupled with the
crossing $s$ where $rs \cong 1 \; mod\; p$.

It is known that for such graphs for $p\to \infty$ the crossing
number grows quadratically in $p$.

Other examples of families of trivalent graphs with quadratic growth
can be constructed by using {\em expander family}; for more about
expanders, see, e.g., \cite{PSSz}. The idea is as follows: for a
graph $\Gamma$ and a set $V$ of vertices of it, we define the
neighbourhood $N(V)$ to be the set of vertices of $\Gamma$ not from
$V$ which are connected to at least one vertex from $V$ by an edge.
It is natural to study the ratio $\frac{|N(V)|}{|V|}$. A family
$F_{n}$ of graphs is called an $\varepsilon$-expander family for
some positive constant $\varepsilon$ if this ratio exceeds
$\varepsilon$ for all graphs $F_{n}$ for sufficiently large $n$ and
for all sets $V_{n}$ of vertices smaller than the half of all
vertices of $F_{n}$.

\section{The Main Theorem}

We are now ready to state and to prove our main result.

\begin{thm}
For some infinite set of positive integers $i$, there is a family
$V_{i}$ of virtual knots such that the virtual crossing number of
$V_{i}$ grows quadratically with respect to the classical crossing
number of $V_{i}$ as $i$ tends to the infinity.
\end{thm}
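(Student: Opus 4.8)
The plan is to reduce the problem about virtual knots to the purely combinatorial question about crossing numbers of four-valent framed graphs, using the bracket invariant (Theorem \ref{mainthm}) and its Corollary \ref{sld} as the bridge. Concretely, I would start from one of the known families of trivalent (or cubic-chord-diagram) graphs $G_i$ whose crossing number $cr(G_i)$ grows quadratically in the number of vertices $v(G_i)$ --- for instance the multiplicative-inverse chord diagrams modulo a prime $p$, or an expander-based family, as recalled at the end of the previous section. The first technical step is to convert each $G_i$ into a framed four-valent graph with a \emph{single} unicursal component: a chord diagram already gives a framed $4$-graph with one unicursal component, so if I use the chord-diagram family this step is essentially free; if I use trivalent expanders I must first thicken/replace vertices to make the graph four-valent and then choose a framing making it unicursal, checking that the crossing number only changes by a controlled (at most linear) amount.

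The second step is to make the resulting framed $4$-graph \emph{irreducibly odd} without destroying the quadratic lower bound on its crossing number. Here I would describe an explicit local modification --- a ``parity-fixing'' move --- which, applied near each even chord (or near each pair of chords witnessing a decreasing second Reidemeister move), adds a bounded number of new crossings and new edges so that afterwards every chord is odd and no decreasing second Reidemeister move applies. The point to verify is twofold: (a) the modification changes the number of vertices by only a linear (in fact constant-per-defect) amount, so if the original graph had $n$ vertices the new graph $\Gamma_i$ has $O(n)$ vertices; and (b) the modification changes the crossing number by at most a linear amount, so $cr(\Gamma_i)$ is still bounded below by $c\cdot v(\Gamma_i)^2$ for some constant $c>0$ and $i$ large. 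Property (b) is the delicate one, since adding edges and vertices could in principle \emph{decrease} the crossing number; I would argue it cannot drop below the original quadratic bound minus a linear correction by exhibiting the original immersion-obstruction (e.g.\ via the bisection-width / expansion argument behind the quadratic lower bound) as still present in $\Gamma_i$, or by a direct counting/subgraph argument.

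The third step packages everything. Given the irreducibly odd framed $4$-graph $\Gamma_i$, interpret it as a virtual knot diagram $V_i$ (classical crossings at the vertices, virtual crossings forced by a generic planar immersion). By Corollary \ref{sld}, $\Gamma_i$ is a minimal representative of the free knot $K_{\Gamma_i}$ with respect to the number of vertices, hence $cl(V_i) = v(\Gamma_i)$ --- no diagram of $V_i$ has fewer classical crossings, because passing to free knots only forgets information, so a diagram of $V_i$ with fewer classical crossings would yield a representative of $K_{\Gamma_i}$ with fewer vertices than the minimum, contradicting the Corollary. On the other hand, any diagram of $V_i$ with $vi(V_i)$ virtual crossings is in particular a generic immersion of \emph{some} framed $4$-graph equivalent to $\Gamma_i$ into the plane; since by irreducible oddness every such representative contains $\Gamma_i$ as a smoothing (again Corollary \ref{sld}), one shows $vi(V_i) \ge cr(\Gamma_i) - O(v(\Gamma_i))$, using that deleting the extra ``smoothing'' vertices and extra edges can only remove a linear number of crossing points. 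Combining, $vi(V_i) \ge cr(\Gamma_i) - O(cl(V_i)) \ge c\,\big(cl(V_i)\big)^2 - O(cl(V_i))$, which is quadratic in $cl(V_i)$.

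I expect the main obstacle to be Step 2(b): guaranteeing that the parity-fixing surgery preserves the quadratic crossing-number lower bound. One must either show the known quadratic-lower-bound mechanism for the chosen family is robust under adding $O(n)$ vertices and edges (which is plausible for bisection-width arguments, since bisection width is stable under bounded-degree local perturbations), or choose the surgery so carefully that an explicit crossing-counting argument survives. A secondary subtlety is checking that after the surgery the graph genuinely has one unicursal component and is genuinely irreducibly odd --- both are finite local checks but need to be stated precisely. Everything else (the reductions via the bracket, the identification $cl(V_i)=v(\Gamma_i)$, and the inequality $vi(V_i)\ge cr(\Gamma_i)-O(v)$) follows fairly directly from Theorem \ref{mainthm} and Corollary \ref{sld} once the combinatorial family is in hand.
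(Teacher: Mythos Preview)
Your overall strategy matches the paper's: build irreducibly odd framed $4$-graphs $\Gamma_i$ from a family with quadratic crossing-number growth, then use Corollary~\ref{sld} to transfer both bounds to the virtual knot. The gap is exactly where you flagged it, Step~2(b), and the paper resolves it by a trick you did not hit on: rather than arguing that the parity-fixing surgery preserves a quadratic lower bound via some bisection-width robustness argument, the paper chooses the surgery so that it is \emph{undone by smoothing}. Concretely (Lemma~\ref{lm2}), one first turns the trivalent graph $L_n$ into a $4$-valent one-component graph $\Gamma'_n$ by adding a perfect matching and choosing an Eulerian framing; then, to make it irreducibly odd, one adds at one or both ends of each chord a small new chord linked with that chord only. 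The key observation (Fig.~\ref{addcrd}) is that adding such a small chord is the inverse of a vertex smoothing on the $4$-graph. Hence $\Gamma'_n$ is obtained from $\Gamma_n$ by smoothing, and Lemma~\ref{lm1} (smoothing never increases $cr$) gives $cr(\Gamma_n)\ge cr(\Gamma'_n)\ge cr(L_n)$ immediately, the last inequality because $L_n$ is a subgraph of $\Gamma'_n$. No expander-stability argument is needed.

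The same Lemma~\ref{lm1} also eliminates your $-O(v)$ corrections in Step~3. If $\Delta$ is the framed $4$-graph underlying any diagram of $V_i$, Corollary~\ref{sld} says $\Gamma_i$ is a smoothing of $\Delta$; but smoothing a vertex in a generic immersion simply repastes four half-edges locally and leaves every edge-crossing point intact, so $cr(\Gamma_i)\le cr(\Delta)\le(\text{number of virtual crossings of that diagram})$ with no error term. Thus $vi(V_i)\ge cr(\Gamma_i)\ge cr(L_n)$ on the nose. Your phrase ``deleting the extra smoothing vertices and extra edges can only remove a linear number of crossing points'' mis-pictures what smoothing does to an immersion. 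Finally, the paper only uses the trivial upper bound $cl(V_i)\le v(\Gamma_i)\le 3\,v(L_n)$; your stronger equality $cl(V_i)=v(\Gamma_i)$ is correct but not required.
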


The proof of this theorem relies upon the following lemmas.

\begin{lem}
Let $K$ be a framed $4$-graph. Let $K'$ be a graph obtained from $K$
by smoothings at some vertices. Then $cr(K')\le cr(K)$.
\label{lm1}
\end{lem}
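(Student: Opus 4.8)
The plan is to reduce to the case of a single smoothing and then argue by a local picture. First I would observe that smoothing at a set of vertices is obtained by iterating the one-vertex operation, so by induction it suffices to prove the inequality $cr(K')\le cr(K)$ when $K'$ is obtained from $K$ by smoothing at exactly one vertex $v$. Fix a generic immersion $f\colon K\to\R^2$ realizing the crossing number, i.e.\ with exactly $cr(K)$ double points, and satisfying the framing-compatibility condition (at the image of every vertex, formally opposite half-edges are geometrically opposite).

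Next I would produce from $f$ an immersion of $K'$ with no more double points. In a small disk $U$ around the point $f(v)$, the image consists of two transverse arcs through $f(v)$, one joining the images of the half-edges $a,b$ and the other joining $c,d$ (using the framing, $a$ is opposite to $b$ and $c$ is opposite to $d$, say). The vertex $v$ contributes no crossing point (it is a graph vertex, not a double point). One of the two smoothings — the one pasting $a$–$b$ and $c$–$d$ — can be realized simply by deleting the point $f(v)$ and keeping the two arcs as they are: the new immersion $f'$ of $K'$ coincides with $f$ outside $U$ and inside $U$ is just the same two disjoint (in $U$) arcs, now regarded as pieces of edges of $K'$ rather than as four half-edges meeting at a vertex. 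This $f'$ is generic and has exactly the same double points as $f$, so $cr(K')\le cr(K)$. For the other smoothing, pasting $a$–$d$ and $b$–$c$, I would modify $f$ inside $U$ by reconnecting the four arc-ends in the crossingless planar way (two nested or side-by-side arcs joining $a$ to $d$ and $b$ to $c$ within the disk $U$), which again introduces no new double points and changes nothing outside $U$; the framing-compatibility condition is irrelevant for $K'$ at this spot since $v$ is gone. Either way we get a generic immersion of $K'$ with at most $cr(K)$ crossing points.

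Finally, taking the minimum over all immersions of $K'$ gives $cr(K')\le cr(K)$, and the induction on the number of smoothed vertices completes the proof. I do not expect a serious obstacle here; the only point requiring a little care is the bookkeeping of half-edges versus edges when $v$ is smoothed (a smoothing can merge edges, split a unicursal component, or create a circular component, as the text notes), but none of these affect the count of double points of the immersion, so the inequality is insensitive to them. A second minor point is checking that after the local reconnection the map is still an immersion and still generic — this is immediate because the modification is supported in a disk meeting the rest of the image only in the four prescribed arc-ends and contains no double points by construction.
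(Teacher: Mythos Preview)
Your overall approach --- take a generic immersion of $K$ realising $cr(K)$ and perform each smoothing locally in the plane to obtain an immersion of $K'$ with no additional double points --- is exactly the paper's argument. (The paper's printed proof appears to swap the roles of $K$ and $K'$ by a typo, but this is clearly what is intended.)

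There is, however, a slip in how you label the two smoothings. You declare $a$ opposite $b$ and $c$ opposite $d$; with that convention the reconnection $a$--$b$, $c$--$d$ is the \emph{straight-through} one and is not a smoothing at all --- the two smoothings at $v$ are the remaining matchings $a$--$c$, $b$--$d$ and $a$--$d$, $b$--$c$. Consequently your ``first case'' treats a non-smoothing, and the claim that the two arcs are ``disjoint in $U$'' is false: they meet transversally at $f(v)$, and merely forgetting the vertex would turn $f(v)$ into a new double point of the immersion, increasing the count by one rather than leaving it unchanged.

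This does no real damage to the argument. Both genuine smoothings join half-edges that are \emph{adjacent} in the cyclic planar order around $f(v)$, so both are covered by precisely the local move you describe in your second case: reconnect the four arc-ends inside $U$ by two disjoint arcs. Once the labelling is corrected, your proof is complete and coincides with the paper's.
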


\begin{lem} Let $L_{n}$ be a family of trivalent graphs such that the
crossing number $cr(L_{n})$ grows quadratically with respect to the
number of vertices $v(L_{n})$ as $n$ tends to the infinity. Then
there are two families of framed $4$-graphs $\Gamma'_{n}$
$\Gamma_{n}$ such that

\begin{enumerate}

\item $\Gamma_{n}$ are all irreducibly odd;

\item The number of vertices of $\Gamma_{n}$ does not exceed
$3$ times the number of vertices of $L_{n}$.

\item $\Gamma'_{n}$ is obtained from $\Gamma_{n}$ by smoothing of
some vertices; both $\Gamma_{n}$ and $\Gamma'_{n}$ are graphs with
one unicursal component;

\item $L_{n}$ is a subgraph of $\Gamma'_{n}$ obtained by removing some edges.

\end{enumerate}
\label{lm2}
\end{lem}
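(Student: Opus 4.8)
\emph{Plan.} The idea is to thicken the trivalent graph $L_n$ into a four-valent framed graph, make it unicursal, and then repair the parities of the crossings with a controlled number of extra crossings, keeping the total below $3v(L_n)$.

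We may assume $L_n$ is connected and bridgeless; this holds for the expander and arithmetic families discussed above. Since $L_n$ is cubic it has an even number of vertices, and by Petersen's theorem it possesses a perfect matching $M_n$. Let $H_n$ be obtained from $L_n$ by adding, alongside each edge of $M_n$, a parallel edge (multiple edges are allowed in a $4$-graph); then $H_n$ is connected and $4$-regular with $v(H_n)=v(L_n)$, and $L_n$ is recovered from $H_n$ by deleting one copy of each doubled edge, so setting $\Gamma'_n:=H_n$ will give condition (4). Being connected and $4$-regular, $H_n$ admits an Eulerian circuit; declaring, at each vertex, the two half-edges traversed consecutively by one passage of the circuit to be formally opposite (and likewise for the other passage) equips $H_n$ with a framing for which following the framing reproduces the Eulerian circuit. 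Hence $H_n$ is a framed $4$-graph with exactly one unicursal component, and it corresponds to a chord diagram $D_n$ with $v(L_n)$ chords.

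Now we correct parities. For each \emph{even} chord $A$ of $D_n$ choose one endpoint $x$ of $A$ and insert a new chord $A^{*}$ both of whose endpoints lie in a tiny arc of the core straddling $x$ and containing no other chord endpoint. Then $A^{*}$ is linked with $A$ and with nothing else, so the linking number of $A$ increases by exactly $1$ (and $A$ becomes odd), every previously odd chord is untouched, and each $A^{*}$ is linked with exactly one chord and is therefore odd. Thus the new chord diagram $D'_n$ has all chords odd; let $\Gamma_n$ be the corresponding framed $4$-graph, which again has one unicursal component. This already gives conditions (1)--(2) apart from the irreducibility requirement: we must in addition ensure that no decreasing second Reidemeister move applies, i.e.\ that there is no pair of chords each of whose two endpoints are adjacent on the core. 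By first moving the endpoints of $D_n$ into general position we may assume $D_n$ itself has no such pair, and we then choose the straddling arcs and, wherever an insertion would create such a pair, a bounded number of further short auxiliary chords --- each still linked with an odd number of chords --- so as to destroy every reducible bigon while keeping all chords odd; this is what consumes the extra crossings, but the total stays within $v(\Gamma_n)\le v(L_n)+2v(L_n)=3v(L_n)$.

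Finally, each auxiliary vertex $A^{*}$ of $\Gamma_n$ was produced by identifying two nearby points of the core of $D_n$, and one of its two smoothings simply reconnects the core arcs into their original configuration. Performing that smoothing at every auxiliary vertex of $\Gamma_n$ returns exactly $D_n$, that is $H_n=\Gamma'_n$, and the diagram remains unicursal throughout; this yields condition (3), while (4) holds as noted, so $L_n$ sits inside $\Gamma'_n$ as the edge-subgraph obtained by deleting the doubled edges. The routine part of the argument is the linking-number bookkeeping of the previous paragraph; the genuine obstacle is to achieve oddness \emph{and} irreducible oddness simultaneously --- to insert the auxiliary chords so that none of them is linked with an even number of chords and no reducible bigon survives --- which is precisely the point where the bound widens from $2v(L_n)$ to $3v(L_n)$. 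Combined with Lemma~\ref{lm1} and Corollary~\ref{sld}, this data will give the quadratic lower bound on $vi$ asserted in the Main Theorem.
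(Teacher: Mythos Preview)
Your overall architecture matches the paper's: augment $L_n$ to a connected $4$-regular graph containing it, take an Eulerian circuit to obtain a framed $4$-graph $\Gamma'_n$ with one unicursal component, then attach short ``small'' chords to the chord diagram to force oddness, with $\Gamma'_n$ recovered from $\Gamma_n$ by smoothing the added vertices. The use of Petersen's theorem and edge-doubling in place of the paper's arbitrary pairing of vertices is a harmless variation.

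The genuine gap is your treatment of \emph{irreducible} oddness. The sentence ``by first moving the endpoints of $D_n$ into general position we may assume $D_n$ itself has no such pair'' has no content: $D_n$ is a combinatorial object determined by $\Gamma'_n$, and whether two chords have both pairs of endpoints adjacent on the core circle is a fixed feature of that object, not something one can perturb away. The follow-up --- ``a bounded number of further short auxiliary chords \ldots\ so as to destroy every reducible bigon while keeping all chords odd'' --- is precisely the step that requires an argument, and none is given: you do not say what these chords are, why inserting them preserves oddness of \emph{all} chords, why they create no new reducible bigons, or why at most $2v(L_n)$ of them suffice. Since your basic recipe leaves every originally odd chord untouched, two such chords may well form a reducible pair in $\Gamma_n$, and this is exactly the case you have not handled.

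The paper closes this gap by a different allocation of the small chords: instead of adding one small chord only at each even chord and then patching, it attaches small chords to \emph{every} former chord --- one or two, the choice made so that the former chord ends up odd. Each small chord is linked with exactly one chord and hence odd. Because every former chord now carries a small chord linked only with it, any two former chords are linked with distinct chord-sets and so cannot form a reducible pair; similar bookkeeping disposes of the former/new and new/new cases (the latter using that $\Gamma'_n$ has no solitary chords). With this recipe irreducibility is automatic, at most two small chords are added per former chord, and $v(\Gamma_n)\le 3\,v(L_n)$ follows at once.
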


\begin{proof}[Proof of the Main Theorem]
Let us take a family of trivalent graphs $L_{n}$ with quadratical
growths of the crossing number. Denote their numbers of vertices by
$v_{n}$ and denote their crossing numbers by $cr_{n}$.

Apply Lemma \ref{lm2}. Consider the families of graphs $\Gamma_{n}$
and $\Gamma'_{n}$. Consider an arbitrary immersion of $\Gamma_{n}$
in $\R^{2}$. Endow all vertices of this immersion with any classical
crossing structure; denote the obtained virtual knot by $K_{n}$.

We claim that the classical crossing number $cl(K_{n})$ of the knot
$K_{n}$ grows linearly with respect to $cr_{n}$, whence the virtual
crossing number $vi(K_{n})$ grows quadratically with respect to
$cr_{n}$. The first claim follows from the construction: the number
of classical crossings of $K_{n}$ does not exceed tree times the
number of vertices of $L_{n}$, so, the minimal classical crossing
number over all diagrams representing the knot given by $K_{n}$ can
be only smaller.

Now, consider $vi(K_{n})$. Let $L$ be a diagram of the knot
represented by $K_{n}$.  So, if we consider the framed $4$-graphs
corresponding to diagrams $L$ and $K_{n}$, they will represent the
same free knot. By definition, $K_{n}$ corresponds to the framed
$4$-graph $\Gamma_{n}$. Denote the framed $4$-graph corresponding to
$L$ by $\Delta$. We see that $\Delta$ represent the same free knot
as $\Gamma_{n}$. Now, apply Theorem \ref{sld} to the free knot
generated by $\Gamma_{n}$. By construction, it is irreducibly odd.
Thus, we see that $\Gamma_{n}$ can be obtained from $\Delta$ by
means of a smoothing at some vertices.

So, by Lemma \ref{lm1}, the (virtual) crossing number of $\Delta$ is
bounded from below by the (virtual) crossing number $vi(K_{n})$ of
$K_{n}$.  By definition, $vi(K_{n})$, in turn,  is bounded from
below by the crossing number of $\Gamma_{n}$. By Lemma \ref{lm1},
the latter is estimated from below by $\Gamma'_{n}$. Finally,
$cr(\Gamma'(n))\ge cr(L_{n})$ because $L_{n}$ is a subgraph of
$\Gamma_{n}$, and $cr(L_{n})$ grows quadratically with respect to
the number of vertices of $L_{n}$.

This completes the proof of the Main Theorem.

\end{proof}

Now let us prove auxiliary Lemmas \ref{lm1} and \ref{lm2}.

\begin{proof}[Proof of Lemma \ref{lm1}]
Indeed, consider an immersion of $K'$ in $\R^{2}$ preserving the
framing and realising the crossing number $cr(K')$. Now, take those
vertices of $K$ where the smoothing $K'\to K$ takes place and
perform this smoothing just on the plane.
\end{proof}

\begin{proof}[Proof of Lemma \ref{lm2}]
Let $L_{n}$ be a connected trivalent graph. Obviously, $n$ is even;
let us couple the vertices of $L_{n}$ arbitrarily and connect
coupled vertices by edges. We get a four-valent graph. We shall
denote it by $\Gamma'_{n}$; to complete the construction of
$\Gamma'_{n}$, we have to find a framing for it in order to get a
diagram of a free knot (with one unicrusal component).

To do this, we shall use Euler's theorem that for every connected
graph with all vertices of even valency there exists a circuit which
passes once through every edge. Let us choose this circuit to be the
unicursal circuit for $\Gamma'_{n}$ thus defining the framing at
each vertices (two consequent edges at every vertex are decreed to
be formally opposite).

Consider the chord diagram of $\Gamma'_{n}$. This diagram might well
have even and odd chords. Our goal is to construct the chord diagram
of $\Gamma_{n}$ by adding some chords to $\Gamma'_{n}$. Namely, for
every chord $l$ of $\Gamma_{n}$ we shall either do nothing, or add
one small chord at one end of $l$ (linked only with $l$) or add two
small chords at both ends of $l$. Our goal is to show that the
obtain an irreducibly odd chord diagram such that the framed
$4$-graph of $\Gamma'_{n}$ is obtained from the framed $4$-graph of
$\Gamma_{n}$ by smoothing of some vertices.

Note that whenever a chord diagram $Y$ is obtained from a chord
diagram $X$ by adding one chord linked precisely with one chord of
$X$ then the corresponding $4$-graph of can be obtained from the
framed $4$-graph of $X$ by smoothing of some vertices.

Indeed, view Fig. \ref{addcrd}.

\begin{figure}
\centering\includegraphics[width=120pt]{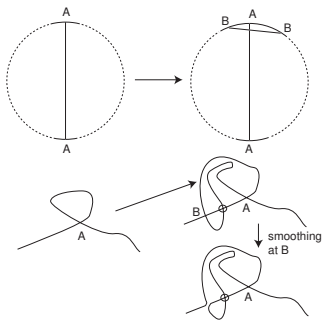} \caption{Addition
of a chord and the inverse operation} \label{addcrd}
\end{figure}

Without loss of generality we may assume that the chord diagram for
$\Gamma'_{n}$ has no {\em solitary} chords (chords not linked with
any other chord).

Now, to every {\em odd} chord of $\Gamma'_{n}$ we add one small
chord on one end of it. To every even chord of $\Gamma'_{n}$ we add
two chords on both flanks. This will guarantee that the resulting
chord diagram is odd (all small chords are odd since each of them is
linked with exactly one chord). Besides, this guarantees that the
resulting chord diagram (or framed $4$-graph) is irreducible.

We shall distinguish between {\em former} chords (belonging to
$\Gamma'_{n}$ and {\em new} chords (small added chords).

Now, no two former chords (for $\Gamma'_{n}$) can be operated on by
a second decreasing Reidemeister move: for each two chords of such
sort $a,b$ there is at least one chord $c$ distinct from $a,b$ which
is linked with $a$ and not with $b$ (it suffices to take one of the
two small chords linked with $a$). A former chord can not
participate in a second Reidemeister move together with a new chord
because every former chord is linked with at least one former chord
and at least one new chord it is linked with, and every new chord is
linked with exactly one chord.

If two new chords $x$ and $y$ are linked with different former
chords, they can not participate in the second Reidemeister move;
neither they can if they are linked with the same former chord: in
this case, since the former chord (say, $z$) is not solitary, there
is at least one chord $w$ lying in between $x$ and $y$, so, the
endpoints of $x$ and $y$ can not be adjacent.

Now, an obvious estimate shows that the number of chords of
$\Gamma_{n}$ does not exceed $3n$.

\end{proof}

\begin{re}
In this direction, one can prove a bit more than stated in the main
theorem: the number of virtual crossings grows  quadratically with
respect to the number of classical crossings not only for virtual
knots, but also for virtual knots considered modulo virtualisation.
\end{re}

\end{document}